\theoremstyle{plain}
\newtheorem{thm}[subsection]{Theorem}
\newtheorem{lem}[subsection]{Lemma}
\newtheorem{cor}[subsection]{Corollary}
\theoremstyle{definition}
\newcommand{\bb}{\mathbb}
\newcommand{\p}{\partial}
\newcommand{\Res}{\text{Res}}
\newcommand{\rd}{\mathrm{d}}
\newcommand{\spa}{\mathrm{span}}
\newcommand{\Grass}{\mathrm{Grass}}
\newcommand{\Hom}{\mathrm{Hom}}
\newcommand{\Ker}{\mathrm{Ker}}
\newcommand{\A}{\mathbf{A}}
\begin{document}
	\date{}
		
	\title[Deformation of Milnor Algebras]{Deformation of Milnor Algebras}
		
	\author[ZHENJIAN WANG]{ ZHENJIAN WANG  }
	\address{YMSC, Tsinghua University, 100084 Beijing, China}
	\email{wzhj@mail.tsinghua.edu.cn}
		
	\subjclass[2010]{Primary 14A25, Secondary 14C34, 14J70}
		
	\keywords{Homogeneous polynomials, Milnor algebras, Macaulay inverse system}
		
	\begin{abstract}
    We investigate deformations of Milnor algebras of smooth homogeneous polynomials, and prove in particular that any smooth degree $d$ homogeneous polynomial in $n+1$ variables that is not of Sebastiani-Thom type is determined by the degree $k$ homogeneous component of its Jacobian ideal for any $d-1\leq k\leq (n+1)(d-2)$. Our results generalize the previous result on the reconstruction of a homogeneous polynomial from its Jacobian ideal.
	\end{abstract}
	\maketitle

\section{Introduction}

The classical theory of variation of Hodge structures for smooth hypersurfaces in a complex projective space gives a variation of Milnor algebras of homogeneous polynomials. The celebrated generic Torelli theorem for hypersurfaces is almost reduced to the study of injectivity of some mappings concerning the deformation of Milnor algebras, see \cite[Subsection 6.3.2, p.179]{Voi} and \cite{Don}. Nevertheless, the homogeneous components of the Milnor algebra involved there are of specific degrees, namely degrees of the form $pd-n-1$. In this note, we will investigate homogeneous components of all degrees of the Milnor algebra.

To fix notation, let $S=\bb{C}[x_0,\cdots,x_n]$ be the homogeneous coordinate ring of the complex projective space $\bb{P}^n$,
$$
S=\bigoplus_{d=0}^\infty S_d,
$$
where $S_d$ is the vector space of homogeneous polynomials of degree $d$. Given a homogeneous polynomial $f\in S_d$, denote
$$
J(f)=(\p f/\p x_0,\p f/\p x_1,\cdots, \p f/\p x_n),
$$
known as the Jacobian ideal of $f$. Set $M(f)=S/J(f)$, known as the Milnor algebra of $f$. The algebra $M(f)$ has the natural grading
$$
M(f)=\bigoplus_{k=0}^\infty M(f)_k,
$$
where $M(f)_k=S_k/(J(f)\cap S_k)$.

We say that $f\in\bb{P}(S_d)$ is a \emph{smooth} polynomial if the hypersurface $V_f: f=0$ in $\bb{P}^n$ is a smooth hypersurface. The discriminant defines a divisor $\Delta\subset\bb{P}(S_d)$ such that the complement $\bb{P}(S_d)_{\Delta}$ parameterizes smooth homogeneous polynomials of degree $d$.

We say that a polynomial $f\in S_d$ is of \emph{Sebastiani-Thom type (ST type)} or a \emph{direct sum} if $f$ can be represented as
\begin{equation}\label{STtype}
f(x_0,\cdots,x_n)=f_1(x_0,\cdots,x_\ell)+f_2(x_{\ell+1},\cdots,x_n)
\end{equation}
for a choice of homogeneous coordinates $\{x_i\}_{i=0}^n$ of $\bb{P}^n$ and some $0\leq\ell<n$; see \cite{UY,ZW,BBKT}. For various characterizations of polynomials of ST type, we refer to \cite{Fed2}.
Denote by $\mathscr{U}\subset\bb{P}(S_d)_{\Delta}$ the set of all smooth homogeneous polynomials that are \emph{not} of ST type.

It is well-known that $\dim M(f)_k$ does not depend on the concrete equation of $f$ for smooth $f$'s (see for instance \cite[Proposition 7.22, p.108]{Dim87}); we denote this dimension by $a_{n,d}(k)$. Let $\Grass(S_k,a_{n,d}(k))$ be the Grassmannian parameterizing all $a_{n,d}(k)$ dimensional \emph{quotient spaces} of $S_k$, then we have the following map
\begin{equation}\label{varphik}
\varphi_k:\,\bb{P}(S_d)_{\Delta}\to\Grass(S_k,a_{n,d}(k)),
\end{equation}
defined by $f\mapsto M(f)_k$.

More generally, denote $\Grass(n+1, S_{d-1})$ the Grassmannian of \emph{linear subspaces} of dimension $n+1$ of the space of degree $d-1$ homogeneous polynomials $S_{d-1}$. Following \cite[Subsection 1.2]{Fed1},
given $W\in\Grass(n+1,S_{d-1})$, we form the ideal $I_W:=(W)$ and the quotient algebra $M(W)=S/I_W$. Let $g_0,\cdots, g_n$ be a basis of $W$, then the sequence $g_0,\cdots,g_n$ is a regular sequence if and only if $I_W$ is a complete intersection ideal if and only if $M(W)$ is a standard local Artinian
Gorestein algebra of socle degree $T:=(n+1)(d-2)$ if and only if the resultant of $g_0,\cdots,g_n$ is nonzero; we refer to \cite[Chapter 13]{GKZ} for the definition and basic properties of the resultant. Therefore, there exists a divisor $\Res\subset\Grass(n+1,S_{d-1})$ parameterizing all $W$ such that $I_W$ is not a complete intersection ideal. We denote by $\Grass(n+1,S_{d-1})_{\Res}$ the affine complement of $\Res$. For more discussions about ideals of the form $I_W$, see \cite[Subsection 1.2]{Fed1} .

For $W\in\Grass(n+1,S_{d-1})_{\Res}$, we have $\dim M(W)_k=a_{n,d}(k)$ by \cite[Proposition 7.22, p.108]{Dim87}. Hence the assignment $W\mapsto M(W)_k$ defines a map
\begin{equation}\label{Phik}
\Phi_k:\,\Grass(n+1,S_{d-1})_{\Res}\to\Grass(S_k, a_{n,d}(k)).
\end{equation}
Our first result is the following theorem.

\begin{thm}\label{injective1}
For any $d-1\leq k\leq T=(n+1)(d-2)$, the map $\Phi_k$ is an immersion, that is, it is injective and the differential $\rd\Phi_k$ is also injective at any point of $\Grass(n+1,S_{d-1})_{\Res}$.
\end{thm}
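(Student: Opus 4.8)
The plan is to reduce both halves of the assertion---injectivity of $\Phi_k$ and injectivity of its differential---to a single non-degeneracy statement about the Artinian Gorenstein algebra $M=M(W)$. Fix $W\in\Grass(n+1,S_{d-1})_{\Res}$, write $M=M(W)$, and recall that $(I_W)_k=W\cdot S_{k-d+1}$ since $I_W$ is generated in degree $d-1$. I identify the tangent space of $\Grass(n+1,S_{d-1})$ at $W$ with $\Hom(W,S_{d-1}/W)=\Hom(W,M_{d-1})$ and the tangent space of the target at $\Phi_k(W)=[(I_W)_k\subset S_k]$ with $\Hom((I_W)_k,M_k)$. The key lemma I would isolate is that for $d-1\le k\le T$ one has
$$(0:_{M_{d-1}}M_{k-d+1}):=\{\,\bar g\in M_{d-1}:\bar g\cdot M_{k-d+1}=0\ \text{in}\ M_k\,\}=0.$$

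To prove this lemma I would combine Gorenstein duality with the fact that $M$ is standard graded. Since $M$ is generated over $\bb C$ by $M_1$, multiplication $M_a\otimes M_b\to M_{a+b}$ is surjective for all $a,b\ge 0$; applied with $a=k-d+1\ge 0$ and $b=T-k\ge 0$ this gives $M_{k-d+1}\cdot M_{T-k}=M_{T-d+1}$. Now if $\bar g\in M_{d-1}$ kills $M_{k-d+1}$, then for all $h\in M_{k-d+1}$ and $\ell\in M_{T-k}$ we have $\bar g\,(h\ell)=(\bar g h)\ell=0$ in $M_T$, so $\bar g$ annihilates $M_{k-d+1}\cdot M_{T-k}=M_{T-d+1}$. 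Because $M$ is Artinian Gorenstein of socle degree $T$, the pairing $M_{d-1}\times M_{T-d+1}\to M_T\cong\bb C$ is perfect, whence $\bar g=0$. Note that the range $d-1\le k\le T$ is precisely what forces both $k-d+1\ge 0$ and $T-k\ge 0$, so no Lefschetz-type hypothesis on $M$ is required.

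Granting the lemma, global injectivity of $\Phi_k$ follows by recovering $W$ from its image: I claim $W=\{g\in S_{d-1}:g\cdot S_{k-d+1}\subseteq(I_W)_k\}$, the inclusion $\subseteq$ being immediate and an element $g$ of the right-hand side giving a class $\bar g\in M_{d-1}$ with $\bar g\,M_{k-d+1}=0$, hence $\bar g=0$ and $g\in W$; applying this description to both $W$ and $W'$ shows $\Phi_k(W)=\Phi_k(W')\Rightarrow W=W'$. For the differential I would compute $\rd\Phi_k$ explicitly. A tangent vector $\phi\in\Hom(W,M_{d-1})$ is the first-order deformation $W_t=\{w+t\,\widetilde{\phi(w)}:w\in W\}$ (lifts $\widetilde{\phi(w)}$ chosen in $S_{d-1}$), and differentiating $(I_{W_t})_k=W_t\cdot S_{k-d+1}$ produces the graded $S$-module map $\Theta:W\otimes S(-(d-1))\to M$, $w\otimes s\mapsto\overline{\widetilde{\phi(w)}\,s}$, which is well defined because $W\cdot s\subseteq I_W$. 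The point requiring care is that $\Theta$ descends along $W\otimes S(-(d-1))\xrightarrow{\mu}I_W$: since $I_W$ is a complete intersection, $\Ker\mu$ is generated by the Koszul syzygies $g_i\otimes g_j-g_j\otimes g_i$, each of which $\Theta$ sends to $\overline{\widetilde{\phi(g_i)}\,g_j}-\overline{\widetilde{\phi(g_j)}\,g_i}=0$ because $g_i,g_j\in W$. As $\Theta$ is $S$-linear it then kills all of $\Ker\mu$, so it factors through $I_W$, and the induced map in degree $k$ is exactly $\rd\Phi_k(\phi)$ (the syzygy-killing also shows this is independent of the chosen lifts). Hence $\rd\Phi_k(\phi)=0$ forces $\Theta=0$, i.e. $\overline{\phi(w)}\in(0:_{M_{d-1}}M_{k-d+1})$ for every $w\in W$, and the lemma yields $\phi=0$.

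I expect the main obstacle to be the bookkeeping of the third paragraph---pinning down $\rd\Phi_k$ as the degree-$k$ descent of the multiplication map $\Theta$ and verifying compatibility with the Koszul syzygies so that it is well defined and frame-independent---rather than the non-degeneracy itself, which falls out cleanly from Gorenstein duality once the standard-graded surjectivity $M_{k-d+1}\cdot M_{T-k}=M_{T-d+1}$ is observed.
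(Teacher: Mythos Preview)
Your argument is correct and in fact somewhat more streamlined than the paper's. The paper first proves injectivity by passing through the Macaulay inverse system: from $(I_W)_k=(I_{W'})_k$ one multiplies up to degree $T$, reads off the associated form $\mathbf A_W\in\bb P(R_T)$ from the one-dimensional orthogonal complement of $(I_W)_T$, and recovers $I_W$ as the apolar ideal $\mathbf A_W^\perp$. For the differential it does \emph{not} argue at the infinitesimal level directly; instead it observes that if $(\rd\Psi_k)_W(h)=0$ then $(I_H)_k\subset(I_W)_k$, forms the actual one-parameter family $W_t=\spa\langle g_i+th_i\rangle$, uses openness of $\Grass(n+1,S_{d-1})_{\Res}$ to get $W_t$ in the domain for small $t$, concludes $(I_{W_t})_k=(I_W)_k$ by a dimension count, and then invokes the already-proved global injectivity to force $W_t=W$.

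Your route replaces both steps by the single annihilator statement $(0:_{M_{d-1}}M_{k-d+1})=0$, proved by combining the surjectivity $M_{k-d+1}\cdot M_{T-k}=M_{T-d+1}$ with the perfect Gorenstein pairing $M_{d-1}\times M_{T-d+1}\to M_T$. This is really the same duality the paper uses (the associated form is precisely a representative of the socle pairing), but you apply it uniformly: the description $W=\{g\in S_{d-1}:gS_{k-d+1}\subset(I_W)_k\}$ gives injectivity, and the identical annihilator computation at the tangent-space level handles the differential without any genuine deformation $W_t$ or appeal to openness of the resultant locus. What the paper's trick buys is brevity---once global injectivity is in hand, the differential comes almost for free---whereas your approach buys a purely algebraic argument that never leaves the tangent space and makes the role of the Gorenstein pairing completely explicit. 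One small wording point: when you write ``$\rd\Phi_k(\phi)=0$ forces $\Theta=0$'', what you actually use (and correctly spell out in the next clause) is that the degree-$k$ component of $\Theta$ vanishes; the conclusion $\phi=0$ then follows from your lemma exactly as you say.
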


Using our pervious result on determination of a polynomial by its Jacobian ideal (see \cite[Theorem 1.1]{ZW} and \cite[Lemma 3]{UY}), we further prove the following result.
\begin{thm}\label{injective2}
For $d-1\leq k\leq T=(n+1)(d-2)$, the restriction of the map $\varphi_k$ (defined in \eqref{varphik}) to $\mathscr{U}$,
$$
\varphi_k:\,\mathscr{U}\to\Grass(S_k,a_{n,d}(k)),
$$
is an immersion.
\end{thm}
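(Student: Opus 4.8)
The plan is to factor $\varphi_k$ through the map defined by the Jacobian ideal and then invoke Theorem \ref{injective1}. Write $\p_i=\p/\p x_i$. For smooth $f$ the forms $\p_0 f,\dots,\p_n f$ are linearly independent (otherwise $f$ would be a cone, hence singular) and form a regular sequence, so their span $W_f:=J(f)_{d-1}=\spa(\p_0 f,\dots,\p_n f)$ is a point of $\Grass(n+1,S_{d-1})_{\Res}$. This gives a morphism
$$
\psi:\,\bb{P}(S_d)_\Delta\to\Grass(n+1,S_{d-1})_{\Res},\qquad f\mapsto W_f.
$$
Since $J(f)=I_{W_f}$ we have $M(f)=M(W_f)$, hence $M(f)_k=M(W_f)_k$ and therefore $\varphi_k=\Phi_k\circ\psi$. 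As a composition of immersions is an immersion, by Theorem \ref{injective1} it suffices to prove that the restriction $\psi|_{\mathscr{U}}$ is an immersion.

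For injectivity, suppose $f,f'\in\mathscr{U}$ satisfy $\psi(f)=\psi(f')$. Then $J(f)=(W_f)=(W_{f'})=J(f')$, so $f$ and $f'$ have the same Jacobian ideal. By the reconstruction result \cite[Theorem 1.1]{ZW} (see also \cite[Lemma 3]{UY}), a smooth polynomial that is not of ST type is determined up to a scalar by its Jacobian ideal; hence $[f]=[f']$ in $\bb{P}(S_d)$, and $\psi|_{\mathscr{U}}$ is injective.

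It remains to show that $\rd\psi$ is injective at every $[f]\in\mathscr{U}$. Identifying $T_{[f]}\bb{P}(S_d)=S_d/\bb{C}f$ and $T_{W_f}\Grass(n+1,S_{d-1})=\Hom(W_f,S_{d-1}/W_f)$, differentiating the family $\spa(\p_0 f+t\,\p_0 g,\dots,\p_n f+t\,\p_n g)$ at $t=0$ yields
$$
\rd\psi(g)\colon\ \p_i f\mapsto \p_i g\ \bmod W_f .
$$
Thus $g\in\Ker\rd\psi$ if and only if $\p_i g\in J(f)$ for all $i$, say $\p_i g=\sum_j c_{ij}\,\p_j f$ for a unique matrix $C=(c_{ij})$. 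Comparing mixed second derivatives forces $C H_f$ to be symmetric, where $H_f=(\p_i\p_j f)$ is the Hessian, and Euler's relation then gives $g=\tfrac1d V_C(f)$ with $V_C=\sum_{i,j}c_{ij}x_i\p_j$. The scalar matrices $C=\lambda\Id$ recover exactly $g\in\bb{C}f$, so injectivity of $\rd\psi$ at $[f]$ amounts to showing that every $C$ with $C H_f$ symmetric satisfies $V_C(f)\in\bb{C}f$.

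This last step is the crux, being the infinitesimal analogue of the reconstruction theorem. That the hypothesis $f\in\mathscr{U}$ cannot be dropped is already visible in the ST case: if $f=f_1(x_0,\dots,x_\ell)+f_2(x_{\ell+1},\dots,x_n)$ then $H_f$ is block diagonal, the block projector $C=\mathrm{diag}(I_{\ell+1},0)$ satisfies $CH_f$ symmetric, and $V_C(f)=d\,f_1\notin\bb{C}f$, so $\rd\psi$ fails to be injective there. The main obstacle is therefore the converse: a non-scalar $C$ with $C H_f=H_f C^{T}$ and $V_C(f)\notin\bb{C}f$ must produce a genuine direct-sum decomposition of $f$, contradicting $f\in\mathscr{U}$. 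I expect to obtain this either by running the argument of \cite{ZW} at the first-order level, or by combining the symmetry condition $C H_f=H_f C^{T}$ with the characterizations of direct sums through the Hessian and the Milnor algebra in \cite{Fed2}, so as to conclude that $C$ is block-diagonalizable compatibly with $f$ and hence scalar whenever $f$ is not a direct sum.
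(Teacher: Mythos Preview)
Your factorization $\varphi_k=\Phi_k\circ\psi$ is sound, and the injectivity of $\psi|_{\mathscr{U}}$ is exactly the reconstruction result you cite. The gap is in the differential: the last paragraph is not a proof but a plan. You reduce correctly to the statement ``if $\p_i g\in W_f$ for all $i$ then $g\in\bb{C}f$'', and then stop, saying you \emph{expect} to derive it from an infinitesimal version of the reconstruction theorem or from the Hessian characterizations in \cite{Fed2}. Neither of these is carried out, and neither is a one-line citation; in particular, the symmetry condition $CH_f=H_fC^T$ does not by itself give a direct-sum decomposition of $f$ without further work.

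The paper avoids this infinitesimal analysis entirely by a deformation trick that also works in your setting and closes the gap immediately. If $\p_ig\in W_f$ for all $i$, then for small $t\ne0$ the polynomial $f+tg$ is smooth and $\p_i(f+tg)=\p_if+t\,\p_ig\in W_f$; hence $W_{f+tg}\subset W_f$, and since both spaces have dimension $n+1$ they coincide. Thus $J(f+tg)=J(f)$, and the \emph{full} reconstruction theorem (not an infinitesimal version) yields $f+tg\in\bb{C}f$, so $g\in\bb{C}f$. The paper's own proof runs this same trick one level up, directly on $E_k(f)$ rather than on $W_f$, but the idea is identical: use the first-order condition to produce an honest one-parameter family with constant Jacobian data, then invoke the global injectivity result rather than attempting to prove a linearized form of it.
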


In particular, we have that a smooth homogeneous polynomial $f\in\mathscr{U}$ can be reconstructed from the degree $k$ homogeneous component of its Jacobian ideal $J(f)$ for any $k$ satisfying $d-1\leq k\leq T=(n+1)(d-2)$. This gives a generalization of the previous results, in the case of smooth polynomials, in \cite{ZW} or \cite{UY}.

We will also investigate the map $\varphi_k$ defined in \eqref{varphik} and discuss its fibers over $\varphi_k(f)$ for homogeneous polynomials $f$'s that are of ST type, see Section \ref{secST}.

Our results are related to the problem of characterizing the hypersurface singularity $\widehat{V}_f=\{x\in\bb{C}^{n+1}:f(x)=0\}$ at the origin $0$ of $\bb{C}^{n+1}$
using the Milnor algebra $M(f)$. In fact, the characterization problem of a singularity by its algebraic data can be proposed and solved in a much more general setting, see \cite{GH} and the references therein. As a general philosophy in singularity theory, the Milnor algebra $M(f)$ is closely connected to the topology and geometry of the hypersurface singularity $(\widehat{V}_f,0)\subset(\bb{C}^{n+1},0)$. Instead of giving characterizations of a singularity by \emph{algebras} or \emph{modules} derived from it, as in \cite{GH}, here using Theorem \ref{injective2}, we can give a characterization of the \emph{isolated} hypersurface singularity $(\widehat{V}_f,0)$ just by a single homogeneous component $M(f)_k$ of the Minor algebra $M(f)$ for any $d-1\leq k\leq (n+1)(d-2)$ with $d=\deg f$. This conclusion can obviously be extended to an isolated complete intersection singularity by using Theorem \ref{injective1}.

Of course, our results concern only the case when the hypersurface $(\widehat{V}_f,0)$ is an isolated singularity. It is natural to extend these results to the case where the singularities of the hypersurface $\widehat{V}_f$ have positive dimension. However, the tools used in this note cannot be directly applied in the extended case because they depend heavily on the condition that $M(f)$ is a local Artinian Gorestein algebra which holds only when 0 is an isolated singularity of $\widehat{V}_f$. In addition, heuristically, the results in \cite{GH} also show that any possible extension must be more complicated and more technical than our results above; see also the results in \cite{ZW} concerning the non-smooth homogeneous polynomials.

We hope the results in this note can be applied to the study of Lefschetz properties for Milnor algebras. In fact, this is an important impetus to our present work. As it is well-known, the strong Lefschetz property holds for $M(f)$ for a generic $f$. Our na\"ive idea is to investigate the Lefschetz properties by deforming the Milnor algebras. For an excellent exposition for the Lefschetz properties, we refer to \cite{HMNW,MN,HMMNW}. In addition, the strong Lefschetz property for $M(f)$ where $f$ is of ST type can be reduced to that where $f$ is not of ST type (see \cite[Theorem 3.10]{HW} and \cite[Proposition 3.77, p.137]{HMMNW}), since $M(f)$ is the tensor product of $M(f_1)$ and $M(f_2)$ when $f$ is represented as in \eqref{STtype}.  This is an important reason why we specifically investigate the set $\mathscr{U}$ in this paper; another reason is about the determination of a homogeneous polynomial by its Jacobian ideal, see the proof of Corollary \ref{cor: equi} below.

We would like to thank Professor Herwig Hauser for the reference \cite{GH}. We thank an anonymous referee for useful remarks and suggestions. We also thank Yau Mathematical Sciences Center for financial support and stimulating working atmosphere.

\section{Polar paring and Macaulay inverse systems}

\subsection{Polar pairing}
Let $S=\bb{C}[x_0,\cdots,x_n]$ and $R=\bb{C}[z_0,\cdots,z_n]$ be two polynomial rings. There is a natural action of $S$ on $R$ by the ``polar paring"
$$
S\times R\to R
$$
defined by
$$
\bigl(f(x_0,\cdots,x_n),Q(z_0,\cdots,z_n)\bigr)\mapsto f\cdot Q:=f(\p/\p z_0,\cdots,z_n)Q(z_0,\cdots,z_n).
$$
It induces perfect parings $S_\rho\times R_\rho\to\bb{C}$ for every $\rho\in\bb{N}$.
In particular, for $f\in S_\rho$ written as
$$
f(x_1,\cdots,x_n)=\sum\limits_{|\alpha|=\rho}a_\alpha x^\alpha
$$
and $Q\in R_\rho$ written as
$$
Q(z_0,\cdots,z_n)=\sum\limits_{|\alpha|=\rho}b_\alpha z^\alpha
$$
we have
$$
f\cdot Q=\sum_{|\alpha|=\rho}\alpha!a_\alpha b_\alpha .
$$
Define the polynomial $q=\sum\limits_{|\alpha|=\rho}b_\alpha x^\alpha$, or equivalently $q(x_0,\cdots,x_n)=Q(x_0,\cdots,x_n)$, and define the inner product of $f$ and $q$ by
\begin{equation}\label{innerproduct}
\langle f,q\rangle=\sum_{|\alpha|=\rho}\alpha! a_\alpha b_\alpha=f\cdot Q.
\end{equation}

For any linear space $E\subset S_\rho$, with respect to the above inner product $\langle\ ,\ \rangle$, we have its orthogonal complement, denoted by $E^{\perp}$.

\subsection{Macaulay inverse system}
Let $I\subset S$ be a Gorestein ideal and $\nu$ the socle degree of the algebra $\mathcal{A}=S/I$. Recall that a (homogeneous) Macaulay inverse system of $\mathcal{A}$ is an element $Q_\mathcal{A}\in\bb{P}(R_\nu)$ such that $I$ is equal to the apolar ideal $Q_\mathcal{A}^\perp$, namely,
$$
I=\{f\in S\,:\, f\cdot Q_\mathcal{A}=0\}
$$
(see \cite[Lemma 2.12]{IK} or \cite[Exercise 2.17]{Eis}).

Let $W=\spa\langle g_0,\cdots,g_n\rangle$ such that $W\in\Grass(n+1,S_{d-1})_{\Res}$, the \emph{associated form} ${\bf A}_W:={\bf A}(g_0,\cdots,g_n)\in\bb{P}(R_{T})$ (recall that $T=(n+1)(d-2)$) gives the Macaulay inverse system for $S_W=S/I_W$; see \cite[Proposition 2.1]{AI1}. We write
$$
{\bf A}_W=\sum_{|\alpha|=T}c_\alpha z^\alpha.
$$
In this case, define ${\bf B}_W\in\bb{P}(S_{T})$ by
$$
{\bf B}_W=\sum_{|\alpha|=T}c_\alpha x^\alpha.
$$
The polynomial ${\bf B}_W$, by definition, determines and is determined by ${\bf A}_W$.
Moreover, by the definition of Macaulay inverse systems, we have that $(I_W)_{T}^\perp=\bb{C}{\bf B}_W$, namely, the line $\bb{C}{\bf B}_W$ is exactly the orthogonal complement of $(I_W)_{T}$ with respect to the inner product $\langle\ ,\ \rangle$ on $S_{T}$. Therefore, $\A_W\in\bb{P}(R_{T})$ is uniquely determined by $(I_W)_{T}$.

\begin{lem}\label{Macaulayinv}
For two points $U,W\in\Grass(n+1,S_{d-1})_{\Res}$, the following statements are equivalent:

(1) $U=W$;

(2) $I_U=I_W$;

(3) For any $k$ satisfying $d-1\leq k\leq T=(n+1)(d-2)$, we have $(I_U)_k=(I_W)_k$;

(4) For some $k$ satisfying $d-1\leq k\leq T$, we have $(I_U)_k=(I_W)_k$;

(5) $(I_U)_{T}=(I_W)_{T}$.
\end{lem}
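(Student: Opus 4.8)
The plan is to prove the cycle of implications $(1)\Rightarrow(2)\Rightarrow(3)\Rightarrow(4)\Rightarrow(5)\Rightarrow(1)$, which renders all five statements equivalent. Two observations drive everything. First, the degree $d-1$ graded piece of $I_W$ recovers $W$ itself, that is $(I_W)_{d-1}=W$, because the generators $g_0,\dots,g_n$ all sit in degree $d-1$ and $S_0=\bb{C}$, so $(I_W)_{d-1}=\spa\langle g_0,\dots,g_n\rangle=W$. Second, the discussion immediately preceding the lemma already shows that $(I_W)_T$ determines the associated form $\A_W\in\bb{P}(R_{T})$ (its orthogonal complement in $S_T$ is the line $\bb{C}{\bf B}_W$) and that $I_W=\A_W^\perp$. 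These two facts will pin down the two endpoints $k=d-1$ and $k=T$ of the range.

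The implications $(1)\Rightarrow(2)\Rightarrow(3)\Rightarrow(4)$ are immediate: passing from $U=W$ to $I_U=I_W$, then to equality of all graded pieces, then to equality of a single graded piece, requires nothing beyond noting that the range $d-1\le k\le T$ is nonempty so that some admissible $k$ exists. The one genuinely computational step is $(4)\Rightarrow(5)$. Here I would use that for any $m\ge d-1$ one has $(I_W)_m=S_{m-d+1}\cdot W$, since $I_W$ is generated in degree $d-1$, together with the elementary identity $S_a\cdot S_b=S_{a+b}$ for full spaces of forms. Assuming $(I_U)_k=(I_W)_k$ for some $k$ with $d-1\le k\le T$, multiplying by $S_{T-k}$, which is legitimate because $T-k\ge 0$, yields
$$
(I_U)_T=S_{T-k}\cdot(I_U)_k=S_{T-k}\cdot(I_W)_k=(I_W)_T,
$$
where on each side $S_{T-k}\cdot(I_\bullet)_k=S_{T-k}\cdot S_{k-d+1}\cdot(\,\cdot\,)=S_{T-d+1}\cdot(\,\cdot\,)=(I_\bullet)_T$. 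This is precisely $(5)$.

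Finally, $(5)\Rightarrow(1)$ is where the Macaulay inverse system does the real work, but thanks to the preparation recorded before the lemma it is short. From $(I_U)_T=(I_W)_T$ the orthogonal complements coincide, so $\bb{C}{\bf B}_U=\bb{C}{\bf B}_W$ and hence $\A_U=\A_W$ in $\bb{P}(R_{T})$; since these associated forms are the Macaulay inverse systems of $S/I_U$ and $S/I_W$, we get $I_U=\A_U^\perp=\A_W^\perp=I_W$. Taking degree $d-1$ parts and invoking $(I_\bullet)_{d-1}=(\,\cdot\,)$ then gives $U=(I_U)_{d-1}=(I_W)_{d-1}=W$, closing the cycle.

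I do not expect a real obstacle: the only substantive content in $(5)\Rightarrow(1)$ has been externalized to the apolarity and Macaulay-duality facts already quoted from the references, and the load-bearing new step inside the lemma is the elementary ``multiply upward'' argument of $(4)\Rightarrow(5)$, which propagates a single intermediate-degree equality all the way to the socle degree $T$. The only point demanding care is the arithmetic at the edges of the range, namely ensuring $T-k\ge 0$ and $k-d+1\ge 0$ so that the multiplication identities $S_{T-k}\cdot S_{k-d+1}=S_{T-d+1}$ apply; both are guaranteed exactly by the hypothesis $d-1\le k\le T$.
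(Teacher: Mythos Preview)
Your proof is correct and follows essentially the same approach as the paper: the trivial implications among (1)--(4), the ``multiply upward'' argument $S_{T-k}\cdot(I_U)_k=(I_U)_T$ for $(4)\Rightarrow(5)$, and the Macaulay inverse system identification $I_U=\A_U^\perp$ for $(5)\Rightarrow(1)$. Your write-up is in fact slightly more explicit than the paper's, spelling out $(I_W)_m=S_{m-d+1}\cdot W$ and the recovery $U=(I_U)_{d-1}$, but the underlying ideas are identical.
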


\begin{proof}
It is obvious that (1), (2), (3) are all equivalent and (3) implies (4).

(4)$\Rightarrow$(5): Since $I_U$ is generated by polynomials all of which have degree $d-1$, we have that $(I_U)_{T}$ is the image of $S_{T-k}\times(I_U)_k$ under the multiplication map
$S_{T-k}\times S_k\to S_{T}$. Hence $(I_U)_{T}=(I_W)_{T}$ whenever $(I_U)_k=(I_V)_k$ for $d-1\leq k\leq T$.

(5)$\Rightarrow$(1): This is clear once we note that $\A_U$ can be uniquely determined by $(I_U)_{T}$, and $I_U$ is the apolar ideal $\A_U^\perp$.
\end{proof}

Recall that as it is shown in the introduction, $a_{n,d}(k)=\dim M(f)_k$ for any $f\in\bb{P}(S_d)_{\Delta}$, which is also the dimension of $S_k/(I_W)_k$ for any $W\in\Grass(n+1,S_{d-1})_{\Res}$. Denote $b_{n,d}(k)=\dim S_k-a_{n,d}(k)$ and let $\Grass(b_{n,d}(k), S_k)$ be the Grassmannian parameterizing all $b_{n,d}(k)$ dimensional linear \emph{subspaces} of $S_k$. For a subspace $E\subset S_k$ of dimension $b_{n,d}(k)$, we obtain the quotient space $S_k/E$ of dimension $a_{n,d}(k)$; and the mapping $S_k\supset E\mapsto S_k/E$ clearly defines an isomorphism between the Grassmannians $\Grass(b_{n,d}(k), S_k)$ and $\Grass(S_k, a_{n,d}(k))$. Then to prove Theorem \ref{injective1}, it suffices to prove the following theorem.

\begin{thm}\label{injective}
For any $d-1\leq k\leq T$, the assignment $W\mapsto(I_W)_k$ defines an immersion
\begin{equation}\label{varphim}
\Psi_k:\,\Grass(n+1,S_{d-1})_{\Res}\to\Grass(b_{n,d}(k), S_k),
\end{equation}
that is $\Psi_k$ is injective and the differential $\rd\Psi_k$ is also injective at any point of $\Grass(n+1,S_{d-1})_{\Res}$.
\end{thm}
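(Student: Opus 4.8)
The plan is to treat the two assertions separately. Injectivity of $\Psi_k$ is essentially immediate: if $\Psi_k(U)=\Psi_k(W)$, then $(I_U)_k=(I_W)_k$ for our fixed $k$ in the range $d-1\le k\le T$, and Lemma \ref{Macaulayinv} (the implication (4)$\Rightarrow$(1)) forces $U=W$. So the entire content of the theorem lies in the injectivity of the differential $\rd\Psi_k$, and that is where I would concentrate the work.

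To analyze $\rd\Psi_k$ at a point $W=\spa\langle g_0,\dots,g_n\rangle$, I would first identify the tangent spaces: $T_W\Grass(n+1,S_{d-1})=\Hom(W,S_{d-1}/W)$ and $T_{(I_W)_k}\Grass(b_{n,d}(k),S_k)=\Hom\bigl((I_W)_k,\,S_k/(I_W)_k\bigr)$. A tangent vector $\phi$ at $W$ is represented by sending $g_i$ to a class $[h_i]\in S_{d-1}/W$; deforming $g_i(t)=g_i+t\,h_i$, the subspace $(I_{W_t})_k$ is spanned by the products $p\,g_i(t)$ with $p\in S_{k-d+1}$, and since the $\Res$-complement is open, $\dim(I_{W_t})_k$ stays constant so $\Psi_k$ is smooth near $W$. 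Differentiating a section $v(t)=\sum_i p_i g_i(t)$ of the tautological family gives
$$
\rd\Psi_k(\phi)\Bigl(\sum_i p_i g_i\Bigr)=\sum_i p_i h_i \ \bmod\ (I_W)_k .
$$
Two verifications are needed here, which I expect to be the routine but essential bookkeeping. First, the right-hand side is independent of the lifts $h_i$: replacing $h_i$ by $h_i+w_i$ with $w_i\in W$ changes the sum by $\sum_i p_i w_i\in S_{k-d+1}W\subseteq(I_W)_k$. Second, it is independent of the representation $v=\sum_i p_i g_i$; this uses that $g_0,\dots,g_n$ is a regular sequence (because $W\in\Grass(n+1,S_{d-1})_{\Res}$), so every degree-$k$ syzygy is a combination of Koszul syzygies $q(g_j\e_i-g_i\e_j)$, and applying the formula to such a syzygy produces $q(g_j h_i-g_i h_j)=(q h_i)g_j-(q h_j)g_i\in(I_W)_k$.

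With the formula in hand, $\rd\Psi_k(\phi)=0$ means $\sum_i p_i h_i\in(I_W)_k$ on all of $(I_W)_k$; testing on the spanning vectors $v=p\,g_i$ shows this is equivalent to $S_{k-d+1}h_i\subseteq(I_W)_k$ for every $i$. Passing to $M(W)=S/I_W$, this says precisely that each class $[h_i]\in M(W)_{d-1}$ is annihilated by the whole component $M(W)_{k-d+1}$. The goal is then to show this forces $[h_i]=0$, i.e.\ $h_i\in W$, i.e.\ $\phi=0$.

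The key algebraic input, which I regard as the crux, is Gorenstein duality. Since $M(W)$ is a standard graded Artinian Gorenstein algebra of socle degree $T=(n+1)(d-2)$, the multiplication pairing $M(W)_{d-1}\times M(W)_{T-(d-1)}\to M(W)_T\cong\bb{C}$ is perfect. The point is to upgrade annihilation in degree $k-d+1$ to annihilation in the top degree: because $k\le T$ we have $k-d+1\le T-(d-1)$, so multiplying the inclusion $S_{k-d+1}h_i\subseteq(I_W)_k$ by $S_{T-k}$ yields $S_{T-(d-1)}h_i\subseteq(I_W)_T$, that is, $[h_i]\cdot M(W)_{T-(d-1)}=0$; perfectness of the pairing then gives $[h_i]=0$. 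This is where the hypothesis $d-1\le k\le T$ is used in full: the lower bound guarantees $k-d+1\ge0$ so that the differential formula even makes sense (for $k=d-1$ it degenerates to $S_0 h_i\subseteq W$, i.e.\ $h_i\in W$ directly), and the upper bound guarantees $T-k\ge0$ so that low-degree annihilation propagates to the socle. I expect no serious obstacle beyond organizing these reductions cleanly; the only substantive ingredient is Gorenstein duality, everything else being linear algebra together with the regularity of $g_0,\dots,g_n$.
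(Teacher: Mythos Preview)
Your proof is correct, and the injectivity half is identical to the paper's. For the differential, however, you take a genuinely different route. The paper argues as follows: from $(\rd\Psi_k)_W(\phi)=0$ it extracts the condition $(I_H)_k\subset(I_W)_k$ (your $S_{k-d+1}h_i\subset(I_W)_k$, repackaged), then sets $W_t=\spa\langle g_0+th_0,\dots,g_n+th_n\rangle$ for small $t$, observes $(I_{W_t})_k\subset(I_W)_k$ and hence $(I_{W_t})_k=(I_W)_k$ by a dimension count, and finally reapplies Lemma~\ref{Macaulayinv} (the implication (4)$\Rightarrow$(1)) to the pair $W_t,W$ to conclude $W_t=W$, i.e.\ $h_i\in W$. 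In other words, the paper bootstraps the already-established injectivity of $\Psi_k$ to get injectivity of $\rd\Psi_k$.

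Your argument instead works element by element and invokes Gorenstein duality directly: from $S_{k-d+1}h_i\subset(I_W)_k$ you multiply by $S_{T-k}$ to get $S_{T-(d-1)}h_i\subset(I_W)_T$, and then the perfect pairing $M(W)_{d-1}\times M(W)_{T-(d-1)}\to M(W)_T$ forces $[h_i]=0$. This is cleaner and more transparent about where the hypothesis $k\le T$ enters, and it avoids the small deformation step; it also sidesteps your own careful Koszul-syzygy check, since for the conclusion you only need the values of $\rd\Psi_k(\phi)$ on the monomial generators $p\,g_i$, not well-definedness on arbitrary elements of $(I_W)_k$. The paper's approach, on the other hand, is more economical in that it recycles Lemma~\ref{Macaulayinv} wholesale rather than reproving the relevant piece of Gorenstein duality; the two arguments ultimately rest on the same algebraic fact, accessed at different levels.
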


\begin{proof}
The injectivity of $\Psi_k$ follows from the equivalence (1)$\Leftrightarrow$(4) in Lemma \ref{Macaulayinv}.

Given $W\in\Grass(n+1,S_{d-1})_{\Res}$ such that $W=\spa\langle g_0,\cdots,g_n\rangle$. For any $h\in T_{W}(\Grass(n+1,S_{d-1})_{\Res})\simeq\Hom(W,S_{d-1}/W)$, choose $h_i\in S_{d-1}$ such that $h(g_i)=h_i\mod W$ for $i=0,\cdots, n$. Then if $h\in\Ker(\rd\Psi_k)_{W}$, we have $(\rd\Psi_k)_{W}(h)=0$ as an element in $\Hom((I_W)_k,S_k/(I_W)_k)$.
A direct computation gives that
$$
(\rd\Psi_k)_{W}(h)((I_W)_k)=(I_H)_k+(I_W)_k\mod(I_W)_k,
$$
where $H=\spa\langle h_0,\cdots, h_n\rangle$. It follows from $(\rd\Psi_k)_{W}(h)=0$ that $(I_H)_k\subset (I_W)_k$.

For $t\in\bb{C}^*$ and $|t|$ sufficiently small, we have that $W_t:=\spa\langle g_0+th_0,\cdots, g_n+h_n\rangle$ satisfies $W_t\in\Grass(n+1,S_{d-1})_{\Res}$. It then follows from $(I_H)_k\subset (I_W)_k$ that
$(I_{W_t})_k\subset(I_W)_k$, hence $(I_{W_t})_k=(I_W)_k$ because $\dim(I_{W_t})_k=b_{n,d}(k)=\dim(I_W)_k$. Therefore $W_t=W$ by (4)$\Rightarrow$(1) in Lemma \ref{Macaulayinv}. It follows that $h_i\in W$ for $i=0,\cdots, n$ and thus $h=0$ as an element of $T_{W}(\Grass(n+1,S_{d-1})_{\Res})$.

Since $h$ can be arbitrarily chosen, $(\rd\Psi_k)_{W}$ is injective. We are done.
\end{proof}

\section{Variation of Milnor algebras}

\subsection{Polynomials not of ST type}
Recall that $\mathscr{U}\subset\bb{P}(S_d)$ denotes the space of smooth homogeneous polynomials of degree $d$ that are not of ST type, or equivalently, the space of smooth hypersurfaces whose defining equations are not of ST type. From the proof of \cite[Corollary 6.1]{ZW}, we have that $\mathscr{U}$ is a Zariski open subset of $\bb{P}(S_d)_\Delta$.

For $f\in\mathscr{U}$, recall that
$J(f)$ denotes the Jacobian ideal of $f$ and $M(f)=S/J(f)$ the Milnor algebra. For $k\geq d-1$, we denote by $E_k(f)=J(f)\cap S_k$.
Then $\dim E_k(f)=b_{n,d}(k)=\dim S_k-a_{n,d}(k)$ is independent of $f\in\mathscr{U}$. Moreover, since $\p f/\p x_0,\cdots,\p f/\p x_n$ form a regular sequence and $J(f)=I_{E_{d-1}(f)}$, from Lemma \ref{Macaulayinv}, we immediately get the following corollary.

\begin{cor}\label{cor: equi}
Given $f,g\in\bb{P}(S_d)$ and $f\in\mathscr{U}$, the following conditions are equivalent:

(1) $E_{d-1}(f)=E_{d-1}(g)$;

(2) $J(f)=J(g)$;

(3) For any $k$ satisfying $d-1\leq k\leq T=(n+1)(d-2)$, we have $E_k(f)=E_k(g)$;

(4) For some $k$ satisfying $d-1\leq k\leq T$, we have $E_k(f)=E_k(g)$;

(5) $E_T(f)=E_T(g)$;

(6) $f=g$.
\end{cor}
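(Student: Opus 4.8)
The plan is to establish all six conditions are equivalent by running the cycle $(6)\Rightarrow(2)\Rightarrow(3)\Rightarrow(5)\Rightarrow(6)$, supplemented by the side implications $(1)\Leftrightarrow(2)$ and $(3)\Rightarrow(4)\Rightarrow(5)$. First I would record the elementary facts valid for an \emph{arbitrary} $h\in\bb{P}(S_d)$: the Jacobian ideal $J(h)$ is generated in degree $d-1$ by the partials, so that $E_{d-1}(h)=\spa\langle\p h/\p x_0,\cdots,\p h/\p x_n\rangle$, $J(h)=(E_{d-1}(h))$, and $E_j(h)=S_{j-(d-1)}\cdot E_{d-1}(h)$ for every $j\geq d-1$. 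Applying this to $f\in\mathscr{U}$ gives $W:=E_{d-1}(f)\in\Grass(n+1,S_{d-1})_{\Res}$ and $E_k(f)=(I_W)_k$ for all $k\geq d-1$. With these facts the implications $(6)\Rightarrow(2)\Rightarrow(3)\Rightarrow(4)$ and $(3)\Rightarrow(5)$ follow at once by intersecting the relevant ideals with $S_k$, while $(1)\Leftrightarrow(2)$ follows because $J(h)=(E_{d-1}(h))$ and $E_{d-1}(h)=J(h)\cap S_{d-1}$.

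For $(4)\Rightarrow(5)$ I would simply multiply up. If $E_k(f)=E_k(g)$ for some $d-1\leq k\leq T$, then since both $J(f)$ and $J(g)$ are generated in degree $d-1$ we get $E_T(g)=S_{T-k}\cdot E_k(g)=S_{T-k}\cdot E_k(f)=E_T(f)$, which is exactly condition $(5)$.

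The heart of the argument, and the step I expect to be the main obstacle, is $(5)\Rightarrow(6)$: here $g$ is only assumed to lie in $\bb{P}(S_d)$ and need not be smooth, so a priori Lemma \ref{Macaulayinv} cannot be applied to the data of $g$. The key observation is that $E_T(f)=E_T(g)$ in fact forces $g$ to be smooth. Indeed, for every $j>T$ the same multiplying-up identity gives $E_j(g)=S_{j-T}\cdot E_T(g)=S_{j-T}\cdot E_T(f)=S_{j-T}\cdot(I_W)_T=(I_W)_j=S_j$, where the last equality uses that $M(W)=S/I_W$ has socle degree $T$. Hence $M(g)_j=0$ for all $j>T$, so the Milnor algebra $M(g)=S/J(g)$ is finite dimensional. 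A finite-dimensional Milnor algebra means the partials $\p g/\p x_0,\cdots,\p g/\p x_n$ have only the origin as common zero, hence form a regular sequence and in particular are linearly independent; therefore $U:=E_{d-1}(g)$ is an $(n+1)$-dimensional complete intersection, i.e. $U\in\Grass(n+1,S_{d-1})_{\Res}$.

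Once $g$ is known to be smooth, both $U$ and $W$ lie in $\Grass(n+1,S_{d-1})_{\Res}$ and satisfy $(I_U)_T=E_T(g)=E_T(f)=(I_W)_T$, so the implication $(5)\Rightarrow(1)$ of Lemma \ref{Macaulayinv} yields $U=W$, that is $E_{d-1}(f)=E_{d-1}(g)$ and $J(f)=J(g)$. Finally I would invoke the reconstruction result for polynomials not of ST type (\cite[Theorem 1.1]{ZW}, \cite[Lemma 3]{UY}): since $f\in\mathscr{U}$ and $J(f)=J(g)$, it follows that $f=g$. This closes the cycle and proves the equivalence of $(1)$ through $(6)$.
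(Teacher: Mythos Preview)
Your proposal is correct and follows essentially the same approach as the paper: the paper's proof observes (in one line) that any of conditions (1)--(5) forces $E_{T+1}(g)=S_{T+1}$ and hence $g$ is smooth, then invokes Lemma~\ref{Macaulayinv} for the equivalence of (1)--(5) and \cite[Theorem 1.1]{ZW}/\cite[Lemma 3]{UY} for (1)$\Leftrightarrow$(6). Your argument is simply a more detailed unpacking of exactly these steps, making explicit the cycle of implications and the smoothness deduction from $M(g)_j=0$ for $j>T$.
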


\begin{proof}
The equivalences among the first five statements follow from Lemma \ref{Macaulayinv}; we here just note that any one of these conditions imply that $E_{T+1}(g)=S_{T+1}$, hence $g$ is also smooth and thus $J(g)$ is a complete intersection ideal.

The equivalence (1)$\Leftrightarrow$(6) follows from \cite[Theorem 1.1]{ZW} or \cite[Lemma 3]{UY}.
\end{proof}

Now we are ready to prove Theorem \ref{injective2}. Similar to the proof of Theorem \ref{injective1}, it is sufficient to prove the following theorem.

\begin{thm}\label{mainthm}
For any $d-1\leq k\leq T$, the assignment $f\mapsto E_k(f)$ defines an immersion
$$
\psi_k:\,\mathscr{U}\to\Grass(b_{n,d}(k),S_k),
$$
Namely, $\psi_k$ is injective and its differential $\rd\psi_k$ is also injective at any point $f\in\mathscr{U}$.
\end{thm}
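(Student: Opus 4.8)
The plan is to follow the proof of Theorem \ref{injective} almost verbatim, with two substitutions: Corollary \ref{cor: equi} plays the role that Lemma \ref{Macaulayinv} played there, and I use that $\bb{P}(S_d)_{\Delta}$ is open to keep a small deformation smooth. The injectivity of $\psi_k$ is then immediate: if $f,g\in\mathscr{U}$ satisfy $E_k(f)=E_k(g)$ for our fixed $k$ with $d-1\leq k\leq T$, then the equivalence (4)$\Leftrightarrow$(6) of Corollary \ref{cor: equi} forces $f=g$. Conceptually the statement also factors as $\psi_k=\Psi_k\circ\beta$, where $\beta:\mathscr{U}\to\Grass(n+1,S_{d-1})_{\Res}$ sends $f$ to $J(f)_{d-1}=\spa\langle\p f/\p x_0,\cdots,\p f/\p x_n\rangle$ (this lands in the resultant complement since $f$ smooth makes the partials a regular sequence), and $\Psi_k$ is the immersion of Theorem \ref{injective}; as $\rd\Psi_k$ is injective everywhere, it is enough to prove $\rd\psi_k$ (equivalently $\rd\beta$) is injective at each $f\in\mathscr{U}$.

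For the differential I would identify $T_f\mathscr{U}\simeq S_d/\bb{C}f$ and represent a tangent vector by some $\dot f\in S_d$. Setting $h_i=\p\dot f/\p x_i$ and $H=\spa\langle h_0,\cdots,h_n\rangle$, the same computation as in Theorem \ref{injective} gives
\[
(\rd\psi_k)_f(\dot f)\bigl(E_k(f)\bigr)=(I_H)_k+E_k(f)\mod E_k(f),
\]
so that $\dot f\in\Ker(\rd\psi_k)_f$ yields $(I_H)_k\subset E_k(f)$. Replacing $\dot f$ by $\dot f+c f$ changes each $h_i$ by $c\,\p f/\p x_i\in J(f)_{d-1}$, so this is well defined on $S_d/\bb{C}f$, exactly as in the earlier proof.

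Next I would integrate this infinitesimal relation. For $t\in\bb{C}^*$ with $|t|$ small, put $f_t=f+t\dot f$. Since $f\in\mathscr{U}\subset\bb{P}(S_d)_{\Delta}$ and the latter is open, $f_t$ is smooth, so $W_t:=J(f_t)_{d-1}=\spa\langle\p f/\p x_0+t h_0,\cdots,\p f/\p x_n+t h_n\rangle$ lies in $\Grass(n+1,S_{d-1})_{\Res}$ and $\dim(I_{W_t})_k=b_{n,d}(k)$. Writing a general element of $(I_{W_t})_k$ as $\sum_i p_i(\p f/\p x_i+t h_i)$ with $p_i\in S_{k-d+1}$, and noting $\sum_i p_i\,\p f/\p x_i\in E_k(f)$ while $\sum_i p_i h_i\in(I_H)_k\subset E_k(f)$, I obtain $(I_{W_t})_k\subset E_k(f)$, hence $E_k(f_t)=(I_{W_t})_k=E_k(f)$ by the dimension count. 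Now Corollary \ref{cor: equi}, implication (4)$\Rightarrow$(6), applied to the pair $f\in\mathscr{U}$ and $g=f_t$, yields $f_t=f$ in $\bb{P}(S_d)$, i.e. $f+t\dot f\in\bb{C}f$; therefore $\dot f\in\bb{C}f$ and the tangent vector vanishes. As $\dot f$ was arbitrary, $(\rd\psi_k)_f$ is injective.

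The step I expect to require the most care is the one where the Sebastiani--Thom hypothesis genuinely enters, namely the implication (4)$\Rightarrow$(6) of Corollary \ref{cor: equi}, which rests on the reconstruction theorem \cite[Theorem 1.1]{ZW}/\cite[Lemma 3]{UY}. This implication is precisely what fails for $f$ of ST type, and it is exactly what forbids a nonzero $\dot f$ with all $\p\dot f/\p x_i\in J(f)$; without it $\rd\psi_k$ could have a kernel. I would therefore be careful to invoke the Corollary only with the fixed $f\in\mathscr{U}$ (the deformed $g=f_t$ need not be assumed smooth or in $\mathscr{U}$ a priori, since condition (4) forces its smoothness, as recorded in the proof of the Corollary), and to use the openness of $\bb{P}(S_d)_{\Delta}$ to guarantee $W_t\in\Grass(n+1,S_{d-1})_{\Res}$ so that the dimension count $\dim(I_{W_t})_k=b_{n,d}(k)$ is available.
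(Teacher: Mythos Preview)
Your proposal is correct and follows essentially the same approach as the paper: injectivity via the equivalence (4)$\Leftrightarrow$(6) of Corollary \ref{cor: equi}, and injectivity of the differential by showing that a tangent vector $\dot f$ in the kernel forces $E_k(f+t\dot f)=E_k(f)$ for small $t$, whence $f+t\dot f=f$ in $\bb{P}(S_d)$. The only cosmetic differences are that the paper writes $E_k(h)$ where you write $(I_H)_k$ (these are the same space), and the paper phrases the dimension equality $\dim E_k(f+t\dot f)=b_{n,d}(k)$ via semicontinuity rather than via the openness of $\bb{P}(S_d)_\Delta$; your factorization $\psi_k=\Psi_k\circ\beta$ is an additional remark not in the paper but does no harm.
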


\begin{proof}
By the equivalence of (4) and (6) in Corollary \ref{cor: equi}, we have that $\psi_k$ is injective.

We will not distinguish an element $f\in \bb{P}(S_d)$ and its lifting in $S_d$. For $f\in\mathscr{U}$, we have $T_f\mathscr{U}=T_f\bb{P}(S_d)\simeq\Hom(\bb{C}f,S_d/\bb{C}f)$. The mapping $\Hom(\bb{C}f,S_d/\bb{C}f)\ni\eta\mapsto\eta(f)\in S_d/\bb{C}f$ then gives an identification $T_f\mathscr{U}\simeq S_d/\bb{C}f$. With the help of this identification, the differential of $\psi_k$ at $f\in\mathscr{U}$ is given by
$$
(\rd\psi_k)_f:\,T_f\mathscr{U}\simeq S_d/\bb{C}f\to\Hom(E_k(f),S_k/E_k(f)).
$$
Therefore, we have $(\rd\psi_k)_f(h)=0$ as an element of $\Hom(E_k(f),S_k/E_k(f))$ for any $h\in\Ker(\rd\psi_k)_f$. Represent $h$ by an element in $S_d/\bb{C}f$, and lift it to an element in $S_d$ which is still denoted by $h$. A direct computation gives that
$$
(\rd\psi_k)_f(h)(E_k(f))=E_k(h)+E_k(f) \mod E_k(f).
$$
Hence it follows from $(\rd\psi_k)_f(h)=0$ that $E_k(h)\subset E_k(f)$.

From the semicontinuity of the dimension of $E_k(f)$ with respect to $f\in S_d$, we obtain that for a small positive number $\epsilon>0$ and for any $t\in\bb{C}$ such that $|t|<\epsilon$, the following hold:

(i) $\dim E_k(f+th)=b_{n,d}(k)=\dim E_k(f)$;

(ii) $E_k(f+th)\subset E_k(f)$.

Hence $E_k(f+th)=E_k(f)$ for any $|t|<\epsilon$. In particular, choosing $t_0\neq 0$ satisfying $|t_0|<\epsilon$, we have $E_k(f+t_0h)=E_k(f)$. Using (4)$\Leftrightarrow$(6) in Corollary \ref{cor: equi} again, we deduce that $f+t_0h=f$ in $\bb{P}(S_d)$, hence $h=f$ in $\bb{P}(S_d)$ which implies that the chosen tangent vector $h\in\Ker(\rd\psi_k)_f$ is equal to zero. Therefore $(\rd\psi_k)_f$ is also injective.
\end{proof}

The above proof also gives the following corollary, which is interesting in its own right; compare with Corollary \ref{cor: equi}.

\begin{cor}
Given $f\in\mathscr{U}$ and $h\in\bb{P}(S_d)$. Suppose $E_k(h)\subset E_k(f)$ for some $d-1\leq k\leq T$, then $h=f$.
\end{cor}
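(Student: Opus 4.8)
The plan is to reuse the deformation argument from the proof of Theorem \ref{mainthm}, now taking the inclusion $E_k(h)\subset E_k(f)$ as the hypothesis rather than deriving it from the vanishing of a differential. First I would observe that, since the Jacobian ideal $J(g)$ of any $g\in S_d$ is generated by the $n+1$ partials $\p g/\p x_i$, all of degree $d-1$, its degree $k$ component is simply $E_k(g)=\{\sum_{i=0}^n G_i\,\p g/\p x_i: G_i\in S_{k-d+1}\}$ for every $g$, with no smoothness needed. Applying this to $g=f+th$ and using $\p(f+th)/\p x_i=\p f/\p x_i+t\,\p h/\p x_i$, each element of $E_k(f+th)$ splits as $\sum_i G_i\,\p f/\p x_i+t\sum_i G_i\,\p h/\p x_i$, where the first summand lies in $E_k(f)$ and the second in $E_k(h)$. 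By hypothesis $E_k(h)\subset E_k(f)$, so I obtain the inclusion $E_k(f+th)\subset E_k(f)$ for every $t\in\bb{C}$.

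Next I would upgrade this inclusion to an equality for small $t$. Since $f$ is smooth and smoothness is a Zariski-open condition on $S_d$, the polynomial $f+th$ is smooth for all $t$ in a small disk $|t|<\epsilon$; for such $t$ one has $\dim E_k(f+th)=b_{n,d}(k)=\dim E_k(f)$. Combined with the inclusion just established, this forces $E_k(f+th)=E_k(f)$ for all $|t|<\epsilon$.

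Finally I would invoke Corollary \ref{cor: equi}. Fixing any $t_0\neq 0$ with $|t_0|<\epsilon$ and setting $g:=f+t_0h$, the equality $E_k(f)=E_k(g)$ is precisely condition (4) of Corollary \ref{cor: equi} for the pair $(f,g)$ with $f\in\mathscr{U}$; hence condition (6) yields $f=g$ in $\bb{P}(S_d)$. Projectively this means the lifts $f$ and $f+t_0h$ are proportional in $S_d$, so $t_0h\in\bb{C}f$ and therefore $h\in\bb{C}f$; since $t_0\neq 0$ and $h\neq 0$, this forces $h=f$ in $\bb{P}(S_d)$, as desired.

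I expect the only delicate point to be the first paragraph: one must ensure the inclusion $E_k(f+th)\subset E_k(f)$ holds for a genuine complex parameter $t$, not merely to first order, which is exactly why expressing $E_k(g)$ through the degree $k$ piece of the ideal generated by the partials---valid for all $g$---is the right move. Everything after that is the same semicontinuity-plus-reconstruction mechanism already used for Theorem \ref{mainthm}, with the hypothesis $E_k(h)\subset E_k(f)$ supplied directly instead of being extracted from $\Ker(\rd\psi_k)_f$.
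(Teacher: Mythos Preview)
Your proposal is correct and follows essentially the same route as the paper, which simply remarks that the corollary is contained in the proof of Theorem~\ref{mainthm}: starting from the hypothesis $E_k(h)\subset E_k(f)$ (rather than deriving it from $(\rd\psi_k)_f(h)=0$), one obtains $E_k(f+th)\subset E_k(f)$, upgrades to equality for small $t$ by openness of smoothness and the dimension count, and concludes via Corollary~\ref{cor: equi}. Your explicit computation that $E_k(f+th)\subset E_k(f)+E_k(h)$ via the partials is exactly what underlies the paper's item~(ii), which it attributes (somewhat loosely) to ``semicontinuity''.
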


\section{Polynomials of Sebastiani-Thom type}\label{secST}
In this section, we give a brief discussion about the fibers of the map $\varphi_k$ in \eqref{varphik} over $\varphi_k(f)$ for a polynomial $f$ of ST type.

By \cite[Proposition 4.8 or Corollary 3.15]{Fed2}, a smooth homogeneous polynomial $f\in S_d$ admits a unique maximally fine ``direct sum decomposition"
\begin{equation}\label{directsumdecomp}
 f(x_0,\cdots,x_n)=f_1(x_0,\cdots,x_{n_1-1})+f_2(x_{n_1},\cdots,x_{n_2})+\cdots+f_s(x_{n_{s-1}},\cdots,x_n),
\end{equation}
for a choice of linear coordinates $\{x_i\}_{i=0}^n$, where $0\leq n_1\leq n_2\leq\cdots\leq n_{s-1}\leq n$ and none of the $f_j$'s is of ST type. In addition, if $g\in S_d$ satisfies $E_{d-1}(g)\subset E_{d-1}(f)$, then necessarily, $g$ is of the following form
\begin{equation}\label{decompg}
g=\lambda_1f_1+\cdots+\lambda_sf_s,\quad\lambda_i\in\bb{C},
\end{equation}
see \cite[Corollary 3.12]{Fed2}. In particular, if $g$ is also smooth, then all the $\lambda_j$'s in \eqref{decompg} are nonzero. With these results at hand, we prove the following theorem.

\begin{thm}
For any $d-1\leq k\leq T=(n+1)(d-1)$ and any $f\in\bb{P}(S_d)_\Delta$, the fiber over $\varphi_k(f)$ of $\varphi_k$ defined in \eqref{varphik}, namely,
$$
\varphi_k:\,\bb{P}(S_d)_{\Delta}\to\Grass(S_k,a_{n,d}(k)),
$$
is
$$
\varphi_k^{-1}(\varphi_k(f))=\{\,\lambda_1f_1+\cdots+\lambda_sf_s\,|\,\lambda_i\in\bb{C}^*, i=1,\cdots,s\,\}.
$$
\end{thm}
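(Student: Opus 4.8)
The plan is to prove the two inclusions separately, after recording the translation that drives everything: a point of $\Grass(S_k,a_{n,d}(k))$ is the same as the kernel of the quotient map $S_k\to M(\cdot)_k$, so $\varphi_k(g)=\varphi_k(f)$ holds if and only if $E_k(g)=E_k(f)$. Thus the fiber $\varphi_k^{-1}(\varphi_k(f))$ is exactly the set of smooth $g\in\bb{P}(S_d)_\Delta$ with $E_k(g)=E_k(f)$, and the whole problem becomes one about the degree $k$ Jacobian pieces.

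For the inclusion $\supseteq$, I take $g=\lambda_1f_1+\cdots+\lambda_sf_s$ with all $\lambda_i\in\bb{C}^*$. Because the summands in \eqref{directsumdecomp} involve pairwise disjoint blocks of variables, for each variable $x_j$ lying in the block of $f_i$ one has $\p g/\p x_j=\lambda_i\,\p f_i/\p x_j=\lambda_i\,\p f/\p x_j$. Since every $\lambda_i$ is nonzero, the partials of $g$ and of $f$ generate the same ideal, so $J(g)=J(f)$; in particular $g$ is smooth (its Jacobian ideal is the complete intersection $J(f)$) and $E_k(g)=E_k(f)$ for every $k$, whence $\varphi_k(g)=\varphi_k(f)$.

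For the inclusion $\subseteq$, let $g\in\bb{P}(S_d)_\Delta$ satisfy $\varphi_k(g)=\varphi_k(f)$, i.e. $E_k(g)=E_k(f)$. Both $f$ and $g$ being smooth, their partials form regular sequences, so $E_{d-1}(f),E_{d-1}(g)\in\Grass(n+1,S_{d-1})_{\Res}$ with $J(f)=I_{E_{d-1}(f)}$ and $J(g)=I_{E_{d-1}(g)}$. The equality $E_k(g)=E_k(f)$ is precisely condition (4) of Lemma \ref{Macaulayinv} for the two points $E_{d-1}(g),E_{d-1}(f)$, so the equivalence (4)$\Leftrightarrow$(1) yields $E_{d-1}(g)=E_{d-1}(f)$. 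In particular $E_{d-1}(g)\subset E_{d-1}(f)$, so Fedorchuk's structural result \cite[Corollary 3.12]{Fed2}, applied to the maximally fine decomposition \eqref{directsumdecomp}, forces $g=\lambda_1f_1+\cdots+\lambda_sf_s$ for some $\lambda_i\in\bb{C}$; and since $g$ is smooth, each $\lambda_i\neq0$ by the remark preceding the theorem. This gives the reverse inclusion and hence the asserted description of the fiber.

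The point that deserves care---and the main obstacle to a naive argument---is that $f$ here is allowed to be of ST type, so Corollary \ref{cor: equi} (which requires $f\in\mathscr{U}$) cannot be invoked to pass from $E_k(g)=E_k(f)$ to $E_{d-1}(g)=E_{d-1}(f)$. The detour through Lemma \ref{Macaulayinv} circumvents this, since that lemma uses only that $J(f)$ and $J(g)$ are complete intersection ideals, a fact guaranteed by smoothness alone and independent of the ST/non-ST dichotomy. Once $E_{d-1}(g)=E_{d-1}(f)$ is in hand, the explicit ST structure of $g$ is supplied entirely by the cited results of Fedorchuk, and the only remaining verification---that the $\lambda_i$ are nonzero, equivalently that the block structure of $f$ is faithfully recorded by $J(f)$---is exactly the elementary partial-derivative computation used in the $\supseteq$ direction.
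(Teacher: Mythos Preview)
Your proof is correct and follows essentially the same route as the paper: translate $\varphi_k(g)=\varphi_k(f)$ into $E_k(g)=E_k(f)$, use Lemma~\ref{Macaulayinv} (4)$\Rightarrow$(1) to descend to $E_{d-1}(g)=E_{d-1}(f)$, and then invoke \cite[Corollary 3.12]{Fed2}. The paper simply declares the $\supseteq$ inclusion obvious, whereas you spell out the partial-derivative computation; and your closing remark on why Corollary~\ref{cor: equi} is unavailable here is a helpful clarification the paper leaves implicit.
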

\begin{proof}
It is obvious that for $\lambda_j$'s nonzero, the polynomial $\lambda_1f_1+\cdots+\lambda_sf_s$ is smooth and is mapped under $\varphi_k$ to $\varphi_k(f)$.

Conversely, if $g\in\bb{P}(S_d)_\Delta$ satisfies $\varphi_k(g)=\varphi_k(f)$, then we have $E_k(g)=E_k(f)$. It follows by (4)$\Rightarrow$(1) in Lemma \ref{Macaulayinv} that $E_{d-1}(g)=E_{d-1}(f)$. Hence by \cite[Corollary 3.12]{Fed2}, we have that $g$ is of the form $\lambda_1f_1+\cdots+\lambda_sf_s$ for nonzero $\lambda_j$'s.
\end{proof}

In conclusion, for the map $\varphi_k$, we can explicitly and completely determine all the fibers.

\bigskip

\end{document}